\documentclass[12pt]{amsart}
\usepackage{amsmath, amssymb, amsthm}

\pdfpageheight=26.5 true cm
\pdfpagewidth=18.8 true cm
\pdfvorigin=1.6 true cm
\pdfhorigin=1.2 true cm

\title[The cyclic subgroup separability]{The~cyclic subgroup separability of~certain generalized free products of~two groups}

\author{P.~A.~Bobrovskii, E.~V.~Sokolov}

\newtheoremstyle{theorem}{10pt}{10pt}{\it}{\parindent}{\bf}{. }{ }{}
\theoremstyle{theorem}
\newtheoremstyle{corollary}{10pt}{10pt}{\it}{\parindent}{\bf}{. }{ }{}
\theoremstyle{corollary}
\newtheoremstyle{proposition}{10pt}{10pt}{\it}{\parindent}{\bf}{. }{ }{}
\theoremstyle{proposition}
\newtheoremstyle{lemma}{10pt}{10pt}{\it}{\parindent}{\bf}{. }{ }{}
\theoremstyle{lemma}
\newtheorem*{theorem}{Theorem}
\newtheorem*{corollary}{Corollary}
\newtheorem{proposition}{Proposition}

\begin{document}

\begin{abstract}
\hspace{-.3em}Free products of~two residually finite groups with~amalgamated retracts are considered. It is proved that a~cyclic subgroup of~such a~group is not finitely separable if, and only if, it is conjugated with~a~subgroup of~a~free factor which is not finitely separable in this factor. A~similar result is obtained for~the~case of~separability in~the~class of~finite \hbox{$p$-}groups.
\end{abstract}

\maketitle

\section{Statement of~results}
\footnotetext{\textit{Key words and~phrases:} cyclic subgroup separability, residual finiteness, residual \hbox{$p$-}finiteness, generalized free product, split extension.}
\footnotetext{2010 \textit{Mathematics Subject Classification:} primary 20E26, 20E06; secondary 20E22.}

The~object of~this paper is to study the~cyclic subgroup separability of~
free products of~two groups with~amalgamated subgroups, which are retracts of~free factors. We recall that a~subgroup~$H$ of~a~group~$G$ is a~\textit{retract} of~this group if~there exists a~subgroup~$F$, normal in~$G$, such that $G=HF$ and~$H\cap F=1$. In~other words, a~subgroup~$H$ is a~retract of~a~group~$G$ if~this group is the~splitting extension of~a~group~$F$ by~$H$.

Let us recall now that a~subgroup~$H$ is said to be \textit{finitely separable} in~a~group~$G$ if~for~any element $g\in G\backslash H$ there exists a~homomorphism~$\varphi$ of~$G$ onto a~finite group such that $g \varphi \notin H \varphi$. A~group~$G$ is called \textit{residually finite} if~its trivial subgroup is finitely separable. Fixing a~prime number~$p$ and~considering in~the~definitions above only homomorphisms onto finite \hbox{$p$-}groups instead of~homomorphisms onto arbitrary finite groups we obtain the~notions of~the~\hbox{$p$-}separability and~the~residual \hbox{$p$-}finiteness.

At last, we shall call a~subgroup~$H$ $p^\prime $-\textit{isolated} in~a~group~$G$ if~for~each element $g\in G$ and~for~each prime number $q \ne p$ $g \in H$ whenever $g^{q} \in H$. It is easy to see that the~property `to be $p^\prime $-isolated' is necessary for~the~\hbox{$p$-}separability. Therefore, we may consider only $p^\prime $-isolated subgroups while studying \hbox{$p$-}separ\-able cyclic subgroups.

If~$F$ is a~group, then we shall denote by~$\Delta (F)$ the~family of~all cyclic subgroups of~$F$ which are not finitely separable in~$F$ and~by~$\Delta_{p}(F)$ the~family of~all $p^\prime $-isolated cyclic subgroups of~$F$ which are not \hbox{$p$-}separ\-able in~this group. The~main result of~this paper is the~following

\begin{theorem}
Let
$$
G=\langle A * B;\ H = K,\ \varphi \rangle
$$
be the~free product of~groups~$A$ and~$B$ with~subgroups~$H$ and~$K$ amalgamated according to 
an isomorphism~$\varphi$. Let also $H$ be a~retract of~$A$ and~$K$ be a~retract of~$B$.

\textup{I.} If~$A$ and~$B$ are residually finite, then a~cyclic subgroup of~$G$ is finitely separable in~this group if, and~only if, it is not conjugated with~any subgroup of~the~family $\Delta (A) \cup \Delta (B)$.

\textup{II.} If~$A$ and~$B$ are residually \hbox{$p$-}finite, then a~$p^\prime $-isolated cyclic subgroup of~$G$ is \hbox{$p$-}separ\-able in~this group if, and~only if, it is not conjugated with~any subgroup of~the~family $\Delta _{p}(A) \cup \Delta _{p}(B)$.
\end{theorem}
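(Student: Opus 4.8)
The plan is to use the retract hypotheses to turn $G$ into a split extension of the amalgamated subgroup and, more importantly, to make $A$, $B$ and $H$ all retracts of $G$, which reduces most separation problems to the factors. Writing $A=F_{A}\rtimes H$ and $B=F_{B}\rtimes K$ with retractions $\pi_{A}\colon A\to H$, $\pi_{B}\colon B\to K$ of kernels $F_{A}$, $F_{B}$, I observe that $\pi_{A}$ and $\varphi^{-1}\pi_{B}$ agree on the amalgamated subgroup and so assemble into a retraction $\rho\colon G\to H$; likewise the identity on $A$ together with $\varphi^{-1}\pi_{B}\colon B\to H\le A$ assembles into a retraction $\sigma_{A}\colon G\to A$, and symmetrically $\sigma_{B}\colon G\to B$. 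Setting $N=\ker\rho$ one has $N\cap A=F_{A}$, $N\cap B=F_{B}$ and $N\cap H=1$, so the action of $N$ on the Bass--Serre tree of $G$ has trivial edge stabilisers and $N$ is a free product of a free group with conjugates of $F_{A}$ and $F_{B}$; in particular $N$ is residually finite, and residually $p$-finite by Gruenberg's theorem when $F_{A}$, $F_{B}$ are.

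\textbf{Necessity} uses none of this, and I would dispose of it first. Finite separability of a subgroup is conjugation invariant, and if a subgroup $C$ contained in a factor is finitely separable in $G$ then restricting the separating homomorphisms to that factor shows $C$ finitely separable there. Hence a subgroup conjugate to a member of $\Delta(A)\cup\Delta(B)$ cannot be finitely separable in $G$, since that would contradict its non-separability in the factor; the same words prove the claim for Part~II with ``finite $p$-group'' in place of ``finite group''.

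\textbf{Sufficiency} is the real content. After conjugating I may assume $g$ is cyclically reduced, and I split on its length. If $g$ lies in a factor, say $g\in A$, then $\langle g\rangle\notin\Delta(A)$, so $\langle g\rangle$ is finitely separable in $A$; given $x\in A\setminus\langle g\rangle$ I separate them by some $\alpha\colon A\to Q$ with $Q$ finite and then \emph{extend $\alpha$ to $G$ without enlarging $Q$}, because $\alpha|_{H}\varphi^{-1}\pi_{B}\colon B\to Q$ agrees with $\alpha$ on the amalgam and the pair defines $\psi\colon G\to Q$ with $\psi|_{A}=\alpha$. For $x\in G\setminus A$ I use instead that $A$, being a retract of the residually finite group $G$, is finitely separable in $G$, so $x$ can be separated from $A$ and a fortiori from $\langle g\rangle$. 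The remaining and decisive case is $g$ cyclically reduced of length $\ge 2$: here the hypothesis on $\Delta(A)\cup\Delta(B)$ is vacuous and I must prove outright that $\langle g\rangle$ is finitely separable. The plan is to choose a finite quotient $\bar H$ of $H$ and, using the retracts, compatible finite quotients $\bar A=\bar F_{A}\rtimes\bar H$ and $\bar B=\bar F_{B}\rtimes\bar H$ of the factors, yielding a generalized free product $\bar G$ of finite groups onto which $G$ maps; choosing $\bar F_{A}$, $\bar F_{B}$ fine enough I can keep the image of $g$ cyclically reduced of length $\ge 2$ and separate $x$ from $\langle g\rangle$ inside $\bar G$, which is virtually free and hence cyclic subgroup separable. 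This construction also settles the case $g=1$, that is, the residual finiteness of $G$ itself.

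The hard part will be exactly this last construction. One must verify that the compatible finite quotients of $A$ and $B$ can be chosen to reproduce enough of the syllable structure of $g$ and its powers, so that length is not destroyed in $\bar G$ and the relevant double cosets of $H$ are separated; the retract hypothesis is precisely the device guaranteeing that any finite image of $H=K$ propagates, through $\pi_{A}$ and $\pi_{B}$, to compatible finite images of both factors. In Part~II there is the extra constraint that only $p$-group quotients are admissible: I would note that $\bar G$ is again a split extension of the finite $p$-group $\bar H$ by a free product of finite $p$-groups, hence residually $p$, keep every assembled quotient a $p$-group, and invoke the $p^{\prime}$-isolation of $\langle g\rangle$ to control roots and torsion, so that $\Delta_{p}(A)\cup\Delta_{p}(B)$ is indeed the only obstruction that survives.
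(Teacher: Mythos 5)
Your overall strategy is in fact the paper's own, rephrased: the observation that the retractions $\pi_{A}$, $\pi_{B}$ let any finite (or finite $p$-) quotient of one factor propagate to a compatible finite quotient of the other is exactly the paper's computation that the families $\Omega(A)$, $\Omega(B)$ of $(H,K,\varphi)$-compatible normal subgroups exhaust all normal subgroups of finite (resp.\ finite $p$-) index (Proposition~1 and the argument following it), and your use of the retraction $G\to A$ for generators lying in a factor, together with the separability of $H$ and of $A$ in the relevant ambient groups, corresponds to Proposition~4. The necessity direction and the reduction for elements conjugate into a factor are fine.

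The genuine gap is the case you yourself flag as ``the hard part'': a cyclically reduced $g$ of length at least $2$. You assert that the compatible finite quotients $\bar A$, $\bar B$ can be chosen so that the syllable structure of $g$ \emph{and of its powers} survives in $\bar G$ and so that $\bar x\notin\langle\bar g\rangle$, and you then appeal to subgroup separability of virtually free groups; but you never carry out that choice, and it is not routine --- one must control the $H$-double cosets of all the syllables of all relevant powers of $g$ simultaneously, and in Part~II one must additionally show that the resulting (free)-by-(finite $p$) amalgam is $p$-separable on the image of a $p'$-isolated cyclic subgroup, where $p'$-isolation does not transfer automatically to $\langle\bar g\rangle$. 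This is precisely the content the paper does not reprove but imports as Propositions~2 and~3 from \cite{li07} (Theorems~1.2 and~1.6 there): separability of a cyclic subgroup by the compatible families $\Omega$, $\Omega_{p}$ implies its finite ($p$-) separability in $G$. Without proving that statement or citing it, your argument establishes only the easy reductions, not the theorem. A secondary, fixable point: your treatment of $x\in G\setminus A$ invokes the residual finiteness of $G$, which in your own logical order is still pending at that stage, since you defer it to the same unproven construction.
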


The~first part of~this theorem generalizes the~result of~R.~B.~J.~T.~Allenby and~R.~J.~Gregorac~\cite{li01} who showed that a~free product with~amalgamated retracts of~two $\pi_{c}$-groups (i.~e., groups with~all cyclic subgroups being finitely separable) is a~$\pi _{c}$-group. A~similar statement for~an~arbitrary number of~free factors was proved by~G.~Kim in~\cite{li04}.

We note that for~any generalized free product 
$$
G=\langle A * B;\ H=K,\ \varphi \rangle,
$$
if a~cyclic subgroup is conjugated with~a~subgroup of~$\Delta (A) \cup \Delta (B)$ ($\Delta _{p}(A) \cup \Delta _{p}(B)$), it is certainly not finitely separable (respectively \hbox{$p$-}separ\-able) in~$G$. Thus, the~theorem formulated states in~fact the~maximality of~the~families of~finitely separable and~\hbox{$p$-}separ\-able cyclic subgroups of~$G$.

\begin{corollary}
The~group~$G$ satisfying the~condition of~the~main theorem 
is residually finite (residually \hbox{$p$-}finite) if, and~only if, the~groups~$A$ and~$B$ are residually finite (respectively, residually \hbox{$p$-}finite).
\end{corollary}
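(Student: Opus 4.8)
The plan is to obtain the corollary by applying the main theorem to the trivial cyclic subgroup $\{1\}$ of $G$. By definition $G$ is residually finite exactly when $\{1\}$ is finitely separable in $G$, and residually $p$-finite exactly when $\{1\}$ is $p$-separable in $G$; since $\{1\}$ is itself a cyclic subgroup, both parts of the theorem apply to it. The two key observations are that the only conjugate of $\{1\}$ in $G$ is $\{1\}$ itself, and that, regarded inside either factor, $\{1\}$ is the trivial subgroup of $A$ (of $B$). Consequently $\{1\}$ belongs to $\Delta(A)$ precisely when $\{1\}$ is not finitely separable in $A$, i.e. when $A$ is not residually finite, and likewise for $B$, $\Delta_p$, and residual $p$-finiteness.

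For the residual-finiteness statement I would argue directly. By the first part of the theorem, $\{1\}$ is finitely separable in $G$ if and only if it is not conjugate to a member of $\Delta(A)\cup\Delta(B)$; by the observations above this holds if and only if $\{1\}\notin\Delta(A)$ and $\{1\}\notin\Delta(B)$, that is, if and only if both $A$ and $B$ are residually finite. This yields both implications at once.

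The $p$-case proceeds along the same lines, with one extra point: the second part of the theorem applies only to $p'$-isolated cyclic subgroups, so before invoking it I must know that $\{1\}$ is $p'$-isolated in $G$. This is the only real work. Assume $A$ and $B$ are residually $p$-finite. Such groups contain no nontrivial element of prime order $q\ne p$, since any such element would be killed in every finite $p$-quotient; combining this with the standard fact that an element of finite order in a free product with amalgamation is always conjugate into one of the factors, one sees that $G$ has no element of order $q\ne p$, i.e. $\{1\}$ is $p'$-isolated in $G$. Moreover $\{1\}$ is then $p$-separable and $p'$-isolated in each factor, so $\{1\}\notin\Delta_p(A)\cup\Delta_p(B)$, and the second part of the theorem gives that $\{1\}$ is $p$-separable in $G$; hence $G$ is residually $p$-finite. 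For the reverse implication I would simply restrict separating homomorphisms of $G$ onto finite $p$-groups to the subgroups $A$ and $B$, which shows that residual $p$-finiteness of $G$ is inherited by both factors. The main obstacle throughout is this verification of $p'$-isolation; the rest is routine bookkeeping about the trivial subgroup and its conjugates.
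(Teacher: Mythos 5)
Your proposal is correct and follows essentially the same route as the paper: apply the main theorem to the trivial subgroup, observe that it is its own only conjugate and lies in $\Delta(A)\cup\Delta(B)$ (resp.\ $\Delta_p(A)\cup\Delta_p(B)$) exactly when a factor fails to be residually finite (resp.\ residually \hbox{$p$-}finite), and establish the $p^\prime$-isolation of the trivial subgroup of $G$ via the torsion theorem for amalgamated free products. The only cosmetic difference is that the paper dismisses the necessity direction as clear, whereas you extract it from the ``only if'' half of the theorem (finite case) or by restricting quotient maps to the factors ($p$-case); both are fine.
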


\begin{proof}
The residual finiteness of~$A$ and~$B$, being equivalent to the~
finite separability of~the~trivial subgroup~$E$ in~this groups, means this subgroup does not belong to the~family $\Delta (A) \cup \Delta (B)$ and, since it is invariant in~$G$, is not conjugated with~any subgroup of~this family. Therefore, by~the~statement~I of~the~main theorem $E$ is finitely separable in~$G$, i.~e. $G$ is residually finite.

Similarly, the~\hbox{$p$-}separability of~$A$ and~$B$ implies $E$ does not belong to the~family $\Delta _{p}(A) \cup \Delta _{p}(B)$. In~addition, this subgroup is $p^\prime $-isolated in~the~free factors, what means they have no elements of~prime orders not equal to $p$. By the~torsion theorem for~generalized free products (see, e.~g., \cite[section~IV, theorem~1.6]{li05}) $G$ does not contain such elements too. Hence, $E$ is $p^\prime $-isolated in~$G$ and~is \hbox{$p$-}separ\-able in~this group according to the~statement~II of~the~main theorem.

Thus, the~sufficiency is proved while the~necessity is clear.
\end{proof}

We note that the~criterion of~the~residual finiteness formulated in~this corollary was proved by~J.~Boler and~B.~Evans in~\cite{li03}.

\section{Proof of~the~main theorem}

Let
$$
G=\langle A * B;\ H=K,\ \varphi \rangle
$$
be the~free product of~groups $A$ and~$B$ with~subgroups $H$ and~$K$ amalgamated according to an~isomorphism~$\varphi$. Following to~\cite{li02} and~\cite{li06} we shall call normal subgroups of~finite index $R \leqslant A$ and~$S \leqslant B$:

a) \textit{$(H, K, \varphi )$-compatible} if~$(R \cap H)\varphi = S \cap K$;

b) \textit{$(H, K, \varphi , p)$-compatible}, where $p$ is a~fixed prime number, if~there exist sequences of~subgroups
$$
R=R_{{\rm 0}} \leqslant \ldots \leqslant R_{m} = A,\ 
S=S_{{\rm 0}} \leqslant \ldots \leqslant S_{n}=B
$$
such that:

1) $R_{i}$, $S_{j}$ are normal subgroups of~$A$ and~$B$ respectively 
($0 \leqslant i \leqslant m$, $0 \leqslant j \leqslant n$);

2) $\vert R_{i+1}/R_{i}\vert = \vert S_{j+1}/S_{j}\vert = p$ 
($0 \leqslant i \leqslant m-1$, $0 \leqslant j \leqslant n-1$);

3) $\varphi $ maps the~set
$$
\{R_{i} \cap H \mid 0 \leqslant i \leqslant m\}
$$
onto the~set 
$$
\{S_{j} \cap K \mid 0 \leqslant j \leqslant n\}.
$$

Let $\Omega $ be the~family of~all pairs $(H, K, \varphi )$-compatible 
subgroups and~$\Omega _{p}$ be the~family of~all pairs $(H, K, \varphi , p)$-compatible subgroups. We shall denote by~$\Omega (A)$, $\Omega _{p}(A)$, $\Omega (B)$, $\Omega _{p}(B)$ the~projections of~these families onto~$A$ and~$B$.

Now we want to prove that if~$H$ and~$K$ are retracts of~the~free factors, then $\Omega (A)$ and~$\Omega (B)$ coincide with~the~families $\Theta (A)$ and~$\Theta (B)$ of~all normal subgroups of~finite index of~$A$ and~$B$ respectively and~$\Omega _{p}(A)$ and~$\Omega _{p}(B)$ coincide with~the~families $\Theta _{p}(A)$ and~$\Theta _{p}(B)$ of~all normal subgroups of~finite \hbox{$p$-}index of~$A$ and~$B$. For~this purpose we need the~following

\begin{proposition}\label{prop1}
Let $Y$ be a~retract of~a~group~$X$ and~$F$ be a~normal 
subgroup of~$X$ such that $X=YF$ and~$Y \cap F=1$. Let also $N$ be a~normal subgroup of~$Y$. Then $NF$ is a~normal subgroup of~$X$, \hbox{$NF \cap Y = N$} and~$X/NF \cong Y/N$. In~particular, $[X:NF]=[Y:N]$.
\end{proposition}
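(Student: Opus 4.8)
The plan is to verify each claim in turn, using the splitting structure $X = YF$ with $Y \cap F = 1$. First I would establish that $NF$ is a subgroup: since $N \leqslant Y$ normalizes itself and $F$ is normal in $X$ (hence normalized by every element of $Y \supseteq N$), the product $NF$ is closed under multiplication and inversion, so it is a subgroup. To see it is normal in $X$, I would use $X = YF$ and check invariance under conjugation by elements of $Y$ and by elements of $F$ separately: conjugation by $F$ fixes $F$ and, since $F$ is normal, sends $N$ into $NF$; conjugation by $y \in Y$ fixes $F$ (normality) and sends $N$ to $N^{y} = N$ because $N \trianglelefteq Y$. Thus $NF \trianglelefteq X$.

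Next I would prove the key identity $NF \cap Y = N$. The inclusion $N \subseteq NF \cap Y$ is immediate since $N \leqslant Y$ and $N \leqslant NF$. For the reverse inclusion, take $x \in NF \cap Y$ and write $x = nf$ with $n \in N \leqslant Y$ and $f \in F$. Then $f = n^{-1}x \in Y$ as well, so $f \in Y \cap F = 1$, giving $x = n \in N$. This is the one step where the hypothesis $Y \cap F = 1$ is genuinely used, and it is really the crux of the whole proposition — everything else follows formally once this intersection is pinned down.

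Finally I would identify the quotient. Consider the restriction to $Y$ of the canonical projection $\pi\colon X \to X/NF$. Its kernel is $Y \cap NF = N$ by the previous step, so $Y/N$ embeds in $X/NF$. Surjectivity follows from $X = YF$: every coset $xNF$ with $x = yf$ ($y \in Y$, $f \in F \leqslant NF$) equals $yNF$, which lies in the image of $Y$. Hence $\pi|_{Y}$ induces an isomorphism $Y/N \cong X/NF$, and comparing indices gives $[X:NF] = [Y:N]$.

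I expect no serious obstacle here; the proposition is essentially a bookkeeping exercise with semidirect products, and the only point requiring care is the trivial-intersection computation $NF \cap Y = N$, which must be invoked to rule out the possibility that $NF$ meets $Y$ in something larger than $N$. The rest is a routine application of the correspondence between cosets of $NF$ in $X$ and cosets of $N$ in $Y$.
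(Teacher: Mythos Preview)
Your proof is correct and follows essentially the same approach as the paper's: both establish normality of $NF$ by checking conjugation using $X=YF$, $F\trianglelefteq X$, and $N\trianglelefteq Y$; both compute $NF\cap Y=N$ by writing an element as $nf$ and using $Y\cap F=1$; and both obtain $X/NF\cong Y/N$ via the second isomorphism theorem (the paper invokes it directly as $YF/NF\cong Y/N(Y\cap F)=Y/N$, while you spell it out by restricting the projection). The only difference is the order of the last two steps and a slightly more compact notation in the paper, which are purely presentational.
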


\begin{proof}
Since $F$ is normal in~$X$ and~$N$ is normal in~$Y$,
$$
(NF)^{X} \subseteq N^{X}F=(N^{Y})^{F}F \subseteq N^{F}F \subseteq NF.
$$

The~triviality of~the~intersection of~$Y$ and~$F$ implies the~following:
$$
X/NF=YF/NF \cong Y/N(Y \cap F)=Y/N.
$$

At~last, considering an~element $x \in NF \cap Y$ and~writing it in~the~
form $x=yf$, where $y \in N$, $f \in F$, we get $f=y^{-1}x \in Y$. 
But~$Y \cap F=1$, so $f=1$ and~$x=y \in N$. Thus, $NF \cap Y \subseteq N$ 
and, since the~inverse inclusion is clear, $NF \cap Y=N$, as~claimed.
\end{proof}

Returning to the~proof of~the~main theorem we consider an~arbitrary normal subgroup $R$ of~finite index of~$A$. The~subgroup $P=R \cap H$ is normal in~$H$, so $Q=P\varphi $ is a~normal subgroup of~finite index of~$K$.

Since $K$ is a~retract of~$B$, there exists a~normal subgroup $F \leqslant B$ satisfying the~conditions $B=KF$ and~$K \cap F=1$. By~Proposition~\ref{prop1} $S=QF$ is a~normal subgroup of~finite index of~$B$ and
$$
S \cap K=Q=(R \cap H)\varphi.
$$
Thus, $(R, S) \in \Omega $, and~so $R \in \Omega (A)$.

Suppose now that $R$ has \hbox{$p$-}index in~$A$.

It is well known that every finite \hbox{$p$-}group possesses a~normal series with~the factors of~order $p$. Let
$$
1=R_{0}/R \leqslant \ldots \leqslant R_{m}/R=A/R
$$
be such a~series of~the~factor-group~$A/R$. Then 
$$
R=R_{0} \leqslant \ldots \leqslant R_{m}=A
$$
is a~sequence of~normal subgroups of~$A$ and, since
$$
R_{i+1}/R_{i} \cong (R_{i+1}/R)/(R_{i}/R),
$$
its factors have the~order $p$ too.

Let us put $P_{i}=R_{i} \cap H$, $Q_{i}=P_{i}\varphi $ and~$S_{i}=Q_{i}F$, where \hbox{$0 \leqslant i \leqslant m$} and~$F$ is the~subgroup of~$B$ defined earlier. Then $P_{i}$ and~$Q_{i}$ are normal and~have finite index in~$H$ and~$K$ respectively. Therefore, by~Proposition~\ref{prop1} $S_{i}$ are normal and~have finite index in~$B$. Moreover,
$$
S_{i+1}/S_{i}=Q_{i+1}F/Q_{i}F \cong Q_{i+1}/Q_{i}(Q_{i+1} \cap F).
$$
But~$Q_{i+1} \leqslant K$ and~$K \cap F=1$, so 
$S_{i+1}/S_{i} \cong Q_{i+1}/Q_{i}$ and~$\vert S_{i+1}/S_{i}\vert =\vert Q_{i+1}/Q_{i}\vert $.

We note, further, that
\begin{align*}
Q_{i+1}/Q_{i} &\cong P_{i+1}/P_{i} \\
&= (R_{i+1} \cap H)/(R_{i} \cap H) \\
&= (R_{i+1} \cap H)/(R_{i} \cap H)(R_{i+1} \cap H \cap R_{i}) \\
&\cong (R_{i+1} \cap H)R_{i}/(R_{i} \cap H)R_{i} \\
&= (R_{i+1} \cap H)R_{i}/R_{i} \\
&\leqslant R_{i+1}/R_{i}.
\end{align*}

So the~order of~the~factor-group $Q_{i+1}/Q_{i}$ and, hence, the~order of~the~factor-group $S_{i+1}/S_{i}$ divides the~order of~the~factor-group\linebreak $R_{i+1}/R_{i}$, which is equal to~$p$. Since $p$ is a~prime number, it follows that either $S_{i+1}=S_{i}$ or~$\vert S_{i+1}/S_{i}\vert =p$. If we remove from the~sequence 
$$
S=S_{0} \leqslant \ldots \leqslant S_{n}=B
$$ 
repeating members, then the~second equality will always take place.

At~last, by~a~construction 
$$
S_{i} \cap K=Q_{i}=(R_{i} \cap H)\varphi.
$$ 
Hence, \hbox{$R \in \Omega _{p}(A)$}.

Thus, all normal subgroups of~finite index of~$A$ are contained in~$\Omega (A)$ and~all normal subgroups of~finite \hbox{$p$-}index are contained in~$\Omega _{p}(A)$. Since the~inverse inclusions are clear, we get the~required equalities $\Theta (A)=\Omega (A)$ and~$\Theta _{p}(A)=\Omega _{p}(A)$. The~arguments for~the~group $B$ are just the~same.

\medskip

We shall say further that a~subgroup $Y$ of~a~group $X$ is 
\textit{separable by~a~family} $\Psi $ \textit{of~normal subgroups of}~$X$ if 
$$
\bigcap_{N \in \Psi} YN = Y.
$$

Recall (see \cite{li02}) that $\Psi $ is a~$Y$-\textit{filtration} if~the~subgroups $Y$ and~$\{1\}$ are separable by~it.

Let us denote by~$\Lambda (A)$ and~$\Lambda (B)$ the~families of~all cyclic subgroups of~$A$ and~$B$ which are not separable by~$\Omega (A)$ and~$\Omega (B)$ respectively, and~by~$\Lambda _{p}(A)$ and~$\Lambda _{p}(B)$ the~families of~all $p^\prime $-isolated cyclic subgroups of~$A$ and~$B$ which are not separable by~$\Omega _{p}(A)$ and~$\Omega _{p}(B)$. The~following general statements take place.

\begin{proposition}\cite[theorem~1.2]{li07}\label{prop2}
Let the~family $\Omega (A)$ be an~$H$-fil\-tra\-tion and~the~family $\Omega (B)$ be a~$K$-fil\-tra\-tion. Then a~cyclic subgroup of~$G$ is finitely separable if~it is not conjugate with~any subgroup of~the~family $\Lambda (A) \cup \Lambda (B)$. \qed
\end{proposition}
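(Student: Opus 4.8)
The plan is to reduce separating a cyclic subgroup $\langle c\rangle$ from an element $g\in G\setminus\langle c\rangle$ to the same problem inside a generalized free product of two finite groups, where it can be settled directly. Every compatible pair $(R,S)\in\Omega$ produces such a product: the equality $(R\cap H)\varphi=S\cap K$ makes $\varphi$ induce an isomorphism $\bar\varphi\colon HR/R\to KS/S$, so the projections $A\to A/R$ and $B\to B/S$ agree on the amalgamated subgroup and extend to an epimorphism $\rho\colon G\to\bar G$ onto $\bar G=\langle A/R * B/S;\ HR/R=KS/S,\ \bar\varphi\rangle$. As $A/R$ and $B/S$ are finite, $\bar G$ is a finitely generated amalgam of two finite groups over a finite subgroup; it acts on its Bass--Serre tree with finite stabilizers, hence is virtually free and therefore subgroup separable. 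In particular the cyclic subgroup $\langle c\rangle\rho$ is finitely separable in $\bar G$, so it suffices to exhibit one pair $(R,S)\in\Omega$ with $g\rho\notin\langle c\rangle\rho$: one then separates these images in a finite quotient of $\bar G$ and composes with $\rho$.

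Since finite separability of a cyclic subgroup is a conjugacy invariant, I would replace $\langle c\rangle$ by a cyclically reduced conjugate and argue by the syllable length of $c$. The case $c=1$ is just the residual finiteness of $G$, which follows from the filtration hypotheses because $\Omega(A)$ separating $\{1\}$ means $\bigcap_{R\in\Omega(A)}R=1$, and likewise for $B$.

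Suppose $c$ lies in a free factor, say $c\in A$, the case $c\in B$ being symmetric. Then the hypothesis that $\langle c\rangle$ is conjugate with no member of $\Lambda(A)\cup\Lambda(B)$ forces $\langle c\rangle\notin\Lambda(A)$, that is, $\langle c\rangle$ is separable by $\Omega(A)$: $\bigcap_{R\in\Omega(A)}\langle c\rangle R=\langle c\rangle$. Writing $g$ in normal form, I would use this separability to pick $R\in\Omega(A)$ separating the $A$-constituents of $g$ from the relevant cosets of $\langle c\rangle$, and use the filtration properties $\bigcap_{R\in\Omega(A)}HR=H$ and $\bigcap_{S\in\Omega(B)}KS=K$ to pick a compatible $(R,S)$ keeping each syllable of $g$ that avoids $H$ or $K$ outside $HR/R$ or $KS/S$. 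This prevents the normal form of $g$ from collapsing into the factor $A/R$ unless $g$ already lay in $A$, in which case the $\Omega(A)$-separability of $\langle c\rangle$ settles the matter at once.

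If $c$ has syllable length at least two, it has infinite order and is conjugate into no factor, and the argument is more robust. The length of $c^{n}$ equals $|n|$ times the length of $c$, so $g$ can share its normal-form length with $c^{n}$ for at most one value of $|n|$; for every other exponent the length alone distinguishes $g$ from $c^{n}$, while for the finitely many remaining exponents $g\ne c^{n}$ holds in $G$. Taking $(R,S)\in\Omega$ large enough --- possible since $\Omega(A)$ and $\Omega(B)$, being filtrations, separate points of $A$ and $B$ --- makes these finitely many inequalities, together with the non-collapse of the normal form, visible in $\bar G$, whence $g\rho\notin\langle c\rangle\rho$. The step I expect to be the main obstacle is the simultaneous control required in the factor case: one must produce a single compatible pair $(R,S)\in\Omega$ that at once separates the finitely many factor-constituents of $g$ from the appropriate cosets of $\langle c\rangle$ and stops the amalgamated subgroup from absorbing any essential syllable of $g$. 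It is exactly here that the separability of $\langle c\rangle$ by $\Omega(A)$, the two filtration hypotheses, and the closure of $\Omega$ under forming compatible pairs meeting finitely many requirements are all indispensable.
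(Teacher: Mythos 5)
The paper does not actually prove this proposition: it is quoted from \cite[theorem~1.2]{li07} and used as a black box (hence the bare \textup{\texttt{\char`\\qed}}), so there is no internal argument to compare yours against. Your strategy --- pass to the quotient $\bar G=\langle A/R * B/S;\ HR/R=KS/S,\ \bar\varphi\rangle$ determined by a compatible pair, observe that $\bar G$ is an amalgam of finite groups and hence cyclic subgroup separable, and reduce everything to exhibiting one pair $(R,S)\in\Omega$ with $g\rho\notin\langle c\rangle\rho$ --- is the classical Baumslag--Allenby--Gregorac scheme that \cite{li07} follows, and the individual claims you make along the way are true.

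The problem is that you stop exactly at what you yourself call ``the main obstacle'': producing a \emph{single} compatible pair meeting finitely many separation requirements simultaneously. This is not in fact an obstacle, but your proof must say why. The observation you appeal to but never verify is that $\Omega$ is closed under componentwise intersection: if $(R_1,S_1),(R_2,S_2)\in\Omega$, then, because $\varphi$ is injective on $H$,
$$
\bigl((R_1\cap R_2)\cap H\bigr)\varphi=(R_1\cap H)\varphi\cap(R_2\cap H)\varphi=(S_1\cap S_2)\cap K,
$$
so $(R_1\cap R_2,\,S_1\cap S_2)\in\Omega$. With this, each of your finitely many requirements (keeping every essential syllable of $g$ and of the cyclically reduced $c$ outside $HR/R$ and $KS/S$, separating $g$ from $\langle c\rangle R$ when both lie in $A$, and witnessing $g c^{-n}\ne 1$ for the finitely many exponents not excluded by length) is met by some compatible pair, and their common refinement meets all of them, since the quotient attached to a smaller compatible pair maps onto the quotient attached to a larger one. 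A second point needs care: for the finitely many dangerous exponents you invoke ``the residual finiteness of $G$,'' but what you actually need is the stronger statement that every nontrivial element of $G$ survives in some quotient $\bar G_{(R,S)}$ with $(R,S)\in\Omega$ --- abstract residual finiteness of $G$ does not by itself place the separating quotient inside your chosen family. That stronger statement is exactly what Baumslag's argument \cite{li02} extracts from the two filtration hypotheses, so the gap is citable; but as written, your proof neither proves nor cites it.
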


\begin{proposition}\cite[theorem~1.6]{li07}\label{prop3}
Let the~family $\Omega _{p}(A)$ be an~$H$-fil\-tra\-tion and~the~family $\Omega _{p}(B)$ be a~$K$-fil\-tra\-tion. Then a~$p^\prime $-isolated cyclic subgroup of~$G$ is \hbox{$p$-}separ\-able if~it is not conjugate with~any subgroup of~the~family $\Lambda _{p}(A) \cup \Lambda _{p}(B)$. \qed
\end{proposition}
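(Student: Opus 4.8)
The plan is to establish $p$-separability of a $p'$-isolated cyclic subgroup $C=\langle g\rangle$ straight from the definition: for each $x\in G\setminus C$ I must produce an epimorphism of $G$ onto a finite $p$-group sending $x$ outside the image of $C$. Both the hypothesis (that $C$ is not conjugate into $\Lambda_p(A)\cup\Lambda_p(B)$) and the conclusion are invariant under replacing $C$ by a conjugate: if $tCt^{-1}$ is $p$-separable then so is $C$, since a finite $p$-quotient separating $txt^{-1}$ from $tCt^{-1}$ gives, after conjugating by the image of $t$, one separating $x$ from $C$. Hence I may assume $g$ is cyclically reduced and argue by its syllable length in the normal form of $G$.

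Every separating map will come from a single device. To a pair $(R,S)\in\Omega_p$, that is, an $(H,K,\varphi,p)$-compatible pair, there corresponds a natural epimorphism
$$
\rho\colon G\longrightarrow \overline G=\langle A/R * B/S;\ \overline H=\overline K,\ \overline\varphi\rangle,
\qquad \overline H=HR/R,\ \overline K=KS/S,
$$
the compatibility being exactly what makes $\overline\varphi$ well defined; thus $\overline G$ is again a generalized free product, now of the finite $p$-groups $A/R$ and $B/S$. If $\overline G$ is residually $p$ and $x\rho\notin C\rho$, then composing $\rho$ with an epimorphism of $\overline G$ onto a finite $p$-group that separates $x\rho$ from $C\rho$ finishes the argument — and this last separation is tractable precisely because the factors of $\overline G$ are now finite. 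So everything reduces to choosing $(R,S)$ so that $x$ is already separated from $C$ in $\overline G$ while $\overline G$ stays residually $p$.

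In the short cases — $g$ lying in $H=K$, or in a single factor outside the amalgam — the subgroup $C$ is conjugate to a cyclic subgroup of $A$ or of $B$, and the hypothesis says exactly that this factor-copy of $C$ is separable by $\Omega_p(A)$ or by $\Omega_p(B)$. I would use this factor-separability to pick $R$ (resp. $S$) isolating $g$ from the relevant projection of $x$, and then invoke the $H$-filtration property $\bigcap_{R\in\Omega_p(A)}HR=H$ (resp. the $K$-filtration property) to shrink the amalgam $\overline H=\overline K$ enough that the separation persists after $\rho$; the $p'$-isolation of $C$ is what keeps the order of $\overline g$ a power of $p$. The residual $p$-ness of $\overline G$ needed here is supplied as in the long case.

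The long case — $g$ cyclically reduced of length at least $2$, hence of infinite order — is the main obstacle, and it splits into two tasks. First one must choose $(R,S)$ so that $\overline g$ keeps full syllable length in $\overline G$, which is where the abundance of compatible pairs guaranteed by the two filtrations is spent, pushing each syllable of $g$ off the corresponding coset of the amalgam; separating $x$ from $\langle\overline g\rangle$ inside $\overline G$ is then the normal-form and conjugacy analysis for a generalized free product with finite factors. The genuinely delicate task is guaranteeing that $\overline G$ is residually $p$, since an amalgam of two finite $p$-groups need not be. This is exactly what the chain condition in the definition of $(H,K,\varphi,p)$-compatibility is designed to deliver: the series $R=R_0\leqslant\cdots\leqslant R_m=A$ and $S=S_0\leqslant\cdots\leqslant S_n=B$ with factors of order $p$ and matching intersections with $H$ and $K$ realise $\overline H=\overline K$ as a common term of filtrations of $A/R$ and $B/S$ by subgroups with successive quotients of order $p$, which is the hypothesis of the residual $p$-criterion for amalgams of finite $p$-groups. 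Assembling the two tasks yields the required finite $p$-quotient of $G$, and hence the $p$-separability of $C$.
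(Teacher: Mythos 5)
First, note that the paper itself contains no proof of this proposition: it is imported from \cite[theorem~1.6]{li07} and stated as an external result, so there is no in-paper argument to compare yours against; I can only assess your sketch on its own terms. Its architecture is almost certainly the right one --- pass to $\overline G=\langle A/R * B/S;\ HR/R=KS/S,\ \overline\varphi\rangle$ for an $(H,K,\varphi,p)$-compatible pair, observe that the chain condition in the definition of compatibility is precisely Higman's criterion for an amalgam of two finite $p$-groups to be residually $p$, and spend the two filtration hypotheses on keeping normal forms alive in $\overline G$.

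There is, however, a genuine gap at the step you declare ``tractable.'' Knowing that $\overline G$ is residually $p$ and that $x\rho\notin C\rho$ does not let you separate $x\rho$ from $C\rho$ in a finite $p$-quotient: in the long case $C\rho=\langle\overline g\rangle$ is an \emph{infinite} subgroup of $\overline G$, so you need $\langle\overline g\rangle$ to be $p$-separable in $\overline G$, and before that $p^\prime$-isolated in $\overline G$; neither follows from residual $p$-finiteness, and neither is addressed by ``normal-form and conjugacy analysis,'' which at best yields $x\rho\notin C\rho$. Contrast the non-$p$ analogue (Proposition~2 of the paper): there an amalgam of finite groups is virtually free, hence subgroup separable, and the corresponding step really is free of charge. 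In the $p$-setting one must argue that $\overline G$ --- which is free-by-(finite $p$-group) once Higman's criterion holds --- has $p$-separable $p^\prime$-isolated cyclic subgroups, and also check that $p^\prime$-isolation of $C$ in $G$ descends to $C\rho$ in $\overline G$, since new $q$-th roots can appear in a quotient. That is the real content of theorem~1.6 of \cite{li07}, and it is exactly the part your outline waves away. A smaller unaddressed point: to push all syllables of $g$ and of $x$ off the amalgam simultaneously you must intersect finitely many compatible pairs, and it is not evident from the definition that $\Omega_p$ is closed under (or even cofinal in) finite intersections, because the chain condition need not survive intersection.
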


As~can be easily shown the~finite separability\,(\hbox{$p$-}separability)\,of~a~subgroup in~a~group is equivalent to its separability by~the~family of~all normal subgroups of~finite index (respectively finite \hbox{$p$-}index) of~this group. Since, in~our case, $\Omega (A)$ and~$\Omega (B)$ exactly coincide with~the~families~$\Theta (A)$ and~$\Theta (B)$ of~all normal subgroups of~finite index of~$A$ and~$B$, the~condition of~Proposition~\ref{prop2} turns out to be equivalent to simultaneous fulfillment of~the~following two statements:

\smallskip

$1$) $A$ and~$B$ are residually finite;

$2$) $H$ and~$K$ are finitely separable in~the~free factors.

\smallskip

In~just the~same way the~condition of~Proposition~\ref{prop3} is equivalent to 
simultaneous fulfillment of~the~following two statements:

\smallskip

$1_{p}$) $A$ and~$B$ are residually \hbox{$p$-}finite;

$2_{p}$) $H$ and~$K$ are \hbox{$p$-}separ\-able in~the~free factors.

\smallskip

In~addition, the~equalities
$$
\Theta (A)=\Omega (A),\ \Theta (B)=\Omega (B),\ 
\Theta _{p}(A)=\Omega _{p}(A),\ \Theta _{p}(B)=\Omega _{p}(B)
$$
imply
$$
\Lambda (A)=\Delta (A),\ \Lambda (B)=\Delta (B),\
\Lambda _{p}(A)=\Delta _{p}(A),\ \Lambda _{p}(B)=\Delta _{p}(B).
$$

So we can reduce both statements of~the~main theorem to~Propositions~\ref{prop2} and~\ref{prop3} if~only prove that the~conditions~$2$) and~$2_{p}$) follow from the~conditions~$1$) and~$1_{p}$) respectively.

\begin{proposition}\label{prop4}
A~retract of~a~residually finite group is finitely separable in~this group. A~retract of~a~residually \hbox{$p$-}finite group is \hbox{$p$-}separ\-able in~this group.
\end{proposition}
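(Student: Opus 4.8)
The plan is to use the retraction homomorphism carried by the splitting. Writing $X = YF$ with $F$ normal and $Y \cap F = 1$, each element of $X$ has a unique expression $yf$ (with $y \in Y$, $f \in F$), and the projection $\rho\colon X \to Y$, $yf \mapsto y$, is an idempotent endomorphism of $X$ with kernel $F$ restricting to the identity on $Y$; indeed $\rho$ is just the composite of the quotient map $X \to X/F$ with the isomorphism $X/F \cong Y$. Thus $Y$ is exactly the fixed-point set of $\rho$: for $x \in X$ one has $x \in Y$ if and only if $x(x\rho)^{-1} = 1$, and in any case $x(x\rho)^{-1} \in F$. So I would fix $x \in X \setminus Y$ and set $c = x(x\rho)^{-1}$, which lies in $F$ and is nontrivial.

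The \emph{main obstacle} is that producing a finite quotient in which $c$ survives is not enough. A homomorphism $\psi$ of $X$ onto a finite group with $c\psi \neq 1$ only tells us $x\psi \neq (x\rho)\psi$, whereas we need $x\psi \notin Y\psi$; the image of $x$ could still coincide with the image of some \emph{other} element of $Y$. The remedy is to choose $\psi$ so that the retraction persists in the quotient, that is, so that $\ker\psi$ is invariant under $\rho$.

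So the decisive step is the construction of such a kernel. By residual finiteness I would pick a normal subgroup $T$ of finite index in $X$ with $c \notin T$, put $W = \{\, g \in X \mid g\rho \in T \,\}$, and take $T^{*} = T \cap W$. Since $\rho$ is surjective onto $Y$ with kernel $F$, the subgroup $W$ is normal of finite index $[Y : T \cap Y]$ in $X$, so $T^{*}$ is normal of finite index and $c \notin T^{*}$ because $c \notin T$; the idempotence $\rho^{2} = \rho$ gives $T^{*}\rho \subseteq T^{*}$, so $\rho$ induces an idempotent endomorphism $\bar\rho$ of the finite group $X/T^{*}$ that fixes the image of $Y$ pointwise. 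Now if the canonical map $\psi\colon X \to X/T^{*}$ had $x\psi \in Y\psi$, applying $\bar\rho$ would give $x\psi = (x\rho)\psi$, hence $c\psi = 1$, contradicting $c \notin T^{*}$. Therefore $x\psi \notin Y\psi$, and $Y$ is finitely separable in $X$.

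For the second statement the same construction works verbatim once one checks that the indices are powers of $p$. Taking $T$ of $p$-power index via residual $p$-finiteness, the index $[X : W] = [Y : T \cap Y]$ divides $[X : T]$ and so is a power of $p$; hence $X/T^{*}$ embeds in the finite $p$-group $(X/T) \times (X/W)$ and is itself a finite $p$-group. The induced retraction $\bar\rho$ then forces $x\psi \notin Y\psi$ inside a finite $p$-group exactly as before, which yields the $p$-separability of $Y$.
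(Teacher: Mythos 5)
Your proof is correct and follows essentially the same route as the paper: your subgroup $T^{*}=T\cap\rho^{-1}(T\cap Y)=T\cap(T\cap Y)F$ is exactly the paper's $L=(N\cap Y)(N\cap F)=N\cap (N\cap Y)F$, with $T$ chosen to exclude the conjugate $c=yfy^{-1}$ of the $F$-component $f$ of $x$ rather than $f$ itself (equivalent, since $T$ is normal). The only difference is cosmetic: the paper verifies $x\notin YL$ directly from the uniqueness of the $YF$-decomposition, whereas you pass to the induced retraction $\bar\rho$ on the finite quotient.
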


\begin{proof}
We shall prove both statements simultaneously.

Let $Y$ be a~retract of~a~residually finite (residually \hbox{$p$-}finite) group $X$ and~$F$ be a~normal subgroup of~$X$ such that $X=YF$ and~$Y \cap F=1$. Let also $x \in X\backslash Y$ be an~arbitrary element. To prove $Y$ is finitely separable (\hbox{$p$-}separ\-able) we only need to find a~normal subgroup~$L$ of~$X$ having finite index (finite \hbox{$p$-}index) in~this group and~such that $x \notin YL$.

Write $x$ in~the~form $x=yf$, where $y \in Y$, $f \in F$. Since $x \notin Y$, $f \ne 1$. So, using the~residual finiteness (the residual \hbox{$p$-}finiteness) of~$X$, one can find a~normal subgroup~$N$ of~finite index 
(respectively finite \hbox{$p$-}index) of~this group such that $f \notin N$. Let us put $U=N \cap F$, $V=N \cap Y$, $L=VU$ and~$M=VF$.

By~Proposition~\ref{prop1} $M$ is normal in~$X$ and, since
$$
G/M \cong Y/V=Y/N \cap Y \cong YN/N \leqslant G/N,
$$
it has finite index (finite \hbox{$p$-}index) in~this group. $L$ possesses the~same properties: it follows from the~easily verifying equality
$$
L=VU=V(F \cap N)=VF \cap N=M \cap N
$$
and~from the~inclusion
$$
M/L=M/M \cap N \cong MN/N \leqslant G/N.
$$

Suppose now that $x \in YL$.

Since $YL=YVU=YU$, one can write $x$ in~the~form $x=zu$, where $z \in Y$, $u \in U$. Then $yf=zu$ and~$z^{-1}y=uf^{-1}$. But~$z^{-1}y \in Y$, $uf^{-1} \in F$ and~$Y \cap F=1$. Therefore, $f=u \in N$, what contradicts the~choice of~$N$.

Thus, $x \notin YL$ and~$L$ is a~required subgroup. This ends the~proof of~Proposition and~the~main theorem.
\end{proof}


\begin{thebibliography}{9}


\bibitem{li01} R.~B.~J.~T.~Allenby, R.~J.~Gregorac, On~locally extended residually finite groups, \textit{Lecture Notes Math.} \textbf{319} (1973), 9--17.

\bibitem{li02} G.~Baumslag, On~the~residual finiteness of~generalized free products of~nilpotent groups, \textit{Trans.\ Amer.\ Math.\ Soc.} \textbf{106} (1963), 193--209.

\bibitem{li03} J.~Boler, B.~Evans, The~free product of~residually finite groups amalgamated along retracts is residually finite, \textit{Proc.\ Amer.\ Math.\ Soc.} (1), \textbf{37} (1973), 50--52.

\bibitem{li04} G.~Kim, Cyclic subgroup separability of~generalized free products, \textit{Can.\ Math.\ Bull.} (3), \textbf{36} (1993), 296--302.

\bibitem{li05} R.~C.~Lindon, P.~E.~Schupp, \textit{Combinatorial group theory.} Springer-Verlag, Berlin, Heidelberg, New York, 1977.

\bibitem{li06} E.~D.~Loginova, Residual finiteness of~the~free product of~two groups with~commuting subgroups, \textit{Sib.\ Math.~J.} \textbf{40} (1999), 341--350; translation from \textit{Sib.\ Mat.~Zh.} \textbf{40} (1999), 395--407 (Russian).

\bibitem{li07} E.~V.~Sokolov, On~the~cyclic subgroup separability of~free products of~two groups with~amalgamated subgroup, \textit{Lobachevskii J.\ Math.} \textbf{11} (2002), 27--38.


\end{thebibliography}
\end{document}